\documentclass[10pt]{amsart}

\usepackage{amssymb,amsfonts,amsthm,amscd,stmaryrd}

\usepackage{color}
\usepackage{ae}
\usepackage[utf8]{inputenc}
\numberwithin{figure}{section}

\usepackage{mathrsfs,a4wide}
\usepackage{esint}

\usepackage{import}

\usepackage[final,allcolors=blue,colorlinks=true]{hyperref}
\usepackage[leqno]{amsmath}

\numberwithin{figure}{section}

\newtheorem{theorem}{Theorem}[section]
\newtheorem{lemma}[theorem]{Lemma}

\theoremstyle{definition}

\numberwithin{equation}{section}

\newcommand{\R}{\mathbb{R}}

\newcommand{\eps}{\varepsilon}

\newcommand{\pot}{\gamma}

\newcommand{\Ha}{{\mathcal{H}}}

\begin{document}

\title[Isoperimetric problem with a Coulombic repulsive term]{Isoperimetric problem with a Coulombic repulsive term}

\author{Vesa Julin}

\address{Department of Mathematics and Statistics, University of Jyv\"askyl\"a, Finland}

\thanks{}

\keywords{}
\subjclass[2000]{}

\begin{abstract} We consider a non-local isoperimetric problem with a repulsive Coulombic term. In dimension three this corresponds to the Gamow's famous  liquid drop model. We show that whenever the mass is small the ball is the unique minimizer of the problem. The proof is based on a strong version of a quantitative isoperimetric inequality introduced in \cite{FJ}.
\end{abstract}
\maketitle

\section{Introduction}

We study a ground state problem with a volume constraint
\begin{equation}
\label{energy}
\min \left( P(F) + \int_{\R^n}\int_{\R^n} \frac{\chi_F(x)\chi_F(y)}{|x-y|^{n-2}} \, dy\,  dx   \!: \,\, |F|=m \right),
\end{equation}
where $F \subset \R^n$ is a set of finite perimeter and $\chi_F$  denotes its characteristic function. Here $P(F)$ is the perimeter of $F$ and the  term with the Newton potential is called non-local part of the energy. It acts as a repulsive force and prefers  sets to be disconnected. 

The most important case is $n=3$, which corresponds to  Gamow's famous liquid drop model \cite{Ga}. It was introduced to model the stability of the atomic nucleus and the atomic fission. The nature of the problem \eqref{energy} is very fundamental and it appears  in many physical phenomena. For more about the physical background and further references see  \cite{Mu}, where the  general dimensional case is considered.

In the absence of the repulsive non-local part of the energy, the above problem is the isoperimetric problem. It is well known that then the unique minimizer, up to a translation, is the ball, and it is therefore plausible that the ball remains the minimizer of \eqref{energy} when the repulsive part of the energy is small. However, due to the different behaviour of the two  competing energy terms, the problem is very delicate and even the existence of a minimizer is highly non-trivial.   

The goal is to prove the following stability result for small masses.
\begin{theorem}
\label{mainthm}
Suppose that $n\geq 3$. There exists $m_n>0$, depending only on the dimension $n$, such that for every $m < m_n$ the ball $B_{r_m}$ is, up to a translation, the unique minimizer of \eqref{energy}.
\end{theorem}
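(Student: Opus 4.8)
The plan is to scale out the volume constraint and then prove a rigidity statement for the ball as minimizer of a non-local isoperimetric problem with small coupling. Writing a competitor as $F=\lambda G$ with $\abs{G}=\abs{B}$, where $B$ is the unit ball, the constraint $\abs{F}=m$ forces $\lambda=(m/\abs{B})^{1/n}$, and since $P(\lambda G)=\lambda^{n-1}P(G)$ while the Coulombic term scales by $\lambda^{n+2}$, problem \eqref{energy} is, up to the positive constant $\lambda^{n-1}$, the minimization of
\begin{equation*}
\F_\eps(G):=P(G)+\eps\,V(G),\qquad V(G):=\int_{\R^n}\int_{\R^n}\frac{\chi_G(x)\chi_G(y)}{\abs{x-y}^{n-2}}\,dy\,dx,\qquad \eps=\Big(\tfrac{m}{\abs{B}}\Big)^{3/n},
\end{equation*}
over sets of volume $\abs{B}$, with $B_{r_m}$ corresponding to $G=B$. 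Hence it suffices to produce $\eps_n>0$, depending only on $n$, so that for $\eps<\eps_n$ the unit ball is, up to translation, the unique minimizer of $\F_\eps$; then $m_n:=\abs{B}\eps_n^{n/3}$. Two facts about $V$ will be used: by the Riesz rearrangement inequality $V(G)\le V(B)$ with equality only for translated balls, and $\abs{V(G)-V(G')}\le C\abs{G\triangle G'}$ for sets contained in a fixed bounded region, because the potential $v_G(y):=\int_G\abs{y-z}^{-(n-2)}\,dz$ is uniformly bounded there.

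Next I would establish existence and uniform regularity of minimizers for small $\eps$, reducing to nearly spherical sets. A minimizing sequence could a priori split off mass at infinity, but for small $\eps$ the subadditivity gain of the non-local term is strictly beaten by the perimeter cost, so a truncation argument in the spirit of Knüpfer--Muratov yields minimizing sequences of uniformly bounded diameter; lower semicontinuity of $P$ together with continuity of $V$ under $L^1$-convergence of uniformly bounded sets produces a minimizer $E_\eps$ with $E_\eps\subset B_R$, $R$ independent of $\eps$. A volume-fixing comparison argument together with the Lipschitz bound on $V$ then shows $E_\eps$ is a $(\Lambda,r_0)$-minimizer of the perimeter with $\Lambda,r_0$ independent of $\eps$. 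Comparing $\F_\eps(E_\eps)\le\F_\eps(B)$ and using the isoperimetric inequality gives $P(E_\eps)-P(B)\le\eps(V(B)-V(E_\eps))\le C\eps$, so by the (classical) quantitative isoperimetric inequality the Fraenkel asymmetry of $E_\eps$ tends to $0$; with uniform $(\Lambda,r_0)$-minimality and the regularity theory for quasi-minimizers this upgrades to $C^{1,\pot}$-convergence of $\partial E_\eps$ to $\partial B$. After a translation, $\partial E_\eps=\{(1+u_\eps(x))x:x\in\partial B\}$ with $\|u_\eps\|_{C^{1,\pot}(\partial B)}\to0$; translating further so that the barycenter of $E_\eps$ is the origin changes $u_\eps$ negligibly and forces both the mean and the first spherical-harmonic component of $u_\eps$ to be of order $\|u_\eps\|_{L^2(\partial B)}^2$, as does the volume constraint.

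The decisive step is a sharp comparison of the two energies on these nearly spherical sets, and this is where the strong quantitative isoperimetric inequality of \cite{FJ} enters. Applied to $E_{u_\eps}$ it bounds $P(E_{u_\eps})-P(B)$ below not only by the squared asymmetry but also by the squared oscillation of the outer normal from the radial direction; for a $C^1$-small normal graph these are comparable to $\|u_\eps\|_{L^1(\partial B)}^2$ and $\|\nabla_\tau u_\eps\|_{L^2(\partial B)}^2$, and after the Poincaré inequality on $\partial B$ and absorbing the (quadratically small) low-frequency part of $u_\eps$ one obtains
\begin{equation*}
P(E_{u_\eps})-P(B)\ \ge\ c_0\,\|u_\eps\|_{H^1(\partial B)}^2
\end{equation*}
for $\eps$ small, with $c_0=c_0(n)>0$. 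On the other hand the Riesz term expands to second order around $u=0$: its first variation is $2\bar v\int_{\partial B}u_\eps$, with $\bar v$ the constant Newtonian potential of $B$ on $\partial B$, hence quadratically small by the volume constraint, and its second variation is governed by the kernel $\abs{x-y}^{-(n-2)}$ on $\partial B$, which is bounded on $L^2(\partial B)$ by Schur's test (plus lower-order curvature terms controlled by $\|u_\eps\|_{L^2}^2$); therefore $V(B)-V(E_{u_\eps})\le C_1\|u_\eps\|_{L^2(\partial B)}^2$ with $C_1=C_1(n)$. Inserting both into $P(E_{u_\eps})-P(B)\le\eps(V(B)-V(E_{u_\eps}))$ gives
\begin{equation*}
c_0\,\|u_\eps\|_{H^1(\partial B)}^2\ \le\ \eps\,C_1\,\|u_\eps\|_{L^2(\partial B)}^2\ \le\ \eps\,C_1\,\|u_\eps\|_{H^1(\partial B)}^2,
\end{equation*}
so for $\eps<\eps_n:=c_0/C_1$ we must have $u_\eps\equiv0$, i.e. $E_\eps=B$.

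I expect the main obstacle to be exactly this final comparison, and it explains why the \emph{strong} inequality is needed. The classical quantitative isoperimetric inequality only gives $P(E)-P(B)\gtrsim\alpha(E)^2$; since $V(B)-V(E)$ is genuinely of order $\abs{E\triangle B}$ (move a little mass from $\partial B$ to infinity), one would conclude merely that the asymmetry is $O(\eps)$, not that it vanishes, and the deficiency persists even after reducing to nearly spherical sets because $\|u\|_{L^1}$ does not control $\|u\|_{L^2}$. Capturing the full $H^1$-coercivity of the perimeter excess — equivalently, that the second variation of $P$ at the ball degenerates \emph{only} along translations, which is precisely what the oscillation term in \cite{FJ} quantifies — is what makes the perimeter gain strictly dominate the non-local loss once $\eps$ is small. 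The remaining ingredients are routine but not trivial: the passage from $L^1$- to $C^{1,\pot}$-closeness (existence with uniform diameter bounds, uniform quasi-minimality, regularity theory), and the justification of the second-order expansions of $P$ and $V$ in the $C^{1,\pot}$ topology rather than assuming $C^2$ regularity.
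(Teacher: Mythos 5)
Your proposal is correct in outline, but it follows a genuinely different (and considerably longer) route than the paper. You pursue the ``selection principle'' strategy familiar from Kn\"upfer--Muratov and Acerbi--Fusco--Morini: establish existence of a minimizer by ruling out loss of mass at infinity for small coupling, prove uniform $(\Lambda,r_0)$-minimality, invoke regularity theory to upgrade $L^1$-closeness to $C^{1,\pot}$-closeness to the ball, parametrize the minimizer as a normal graph $E_u$ over $\partial B$, and then win by comparing the second variations of $P$ and of the Riesz energy, using the strong inequality of \cite{FJ} to supply the $H^1(\partial B)$-coercivity of the perimeter excess that the classical quantitative isoperimetric inequality cannot. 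The quadratic bound $V(B)-V(E_u)\lesssim\|u\|_{L^2(\partial B)}^2$ via Schur's test and the absorption of the degenerate low modes by the volume and barycenter constraints are both sound, and your final contradiction $c_0\|u\|_{H^1}^2\le \eps C_1\|u\|_{H^1}^2$ closes the argument. Your diagnosis of \emph{why} the strong inequality is needed — that a first-order bound $\alpha(E)^2\lesssim P(E)-P(B)$ only yields $\alpha(E)=O(\eps)$, while the nonlocal loss is genuinely first order in $|E\triangle B|$ — is exactly right.

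The paper, however, avoids every one of the intermediate steps you list as ``routine but not trivial.'' There is no existence argument, no truncation, no $(\Lambda,r_0)$-minimality, no regularity theory, and no reduction to nearly spherical sets. Instead the author compares an \emph{arbitrary} competitor $E$ with $|E|=|B_1|$ directly against the ball. The key is to rewrite the oscillation asymmetry $\beta(E)^2$ of \cite{FJ}, via the divergence theorem, as a \emph{potential} asymmetry
\[
\pot(E)=\min_{y}\left(\int_{B_1(y)}\frac{n-1}{|x-y|}\,dx-\int_E\frac{n-1}{|x-y|}\,dx\right),
\]
so that \eqref{pot_form} reads $P(E)-P(B_1)\ge C_n\pot(E)$. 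Then, expanding the difference of Riesz energies and discarding the nonnegative ``square'' term, one gets $\text{NL}(B_1)-\text{NL}(E)\le 2\int_{B_1}v$, with $v$ the Newtonian potential of $\chi_{B_1}-\chi_E$. In $n=3$ superharmonicity of $v$ in $B_1$ and the mean value property immediately give $2\fint_{B_1}v\le 2v(0)=\pot(E)$, and the theorem follows in one line from \eqref{pot_form}. For $n\ge 4$ the mean value step is replaced by an iterated Riesz-potential argument (Lemma \ref{induction}) and, in even dimensions, a harmonic extension for the half-Laplacian. This direct comparison is what allows the paper to treat all $n\ge 3$ uniformly and explains the clean bound $m_3\ge 1/C_3$; your route, while it could plausibly be pushed through, inherits the heavy existence/regularity machinery and the attendant dimensional subtleties that the paper is explicitly designed to bypass.
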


The above result was proven in a very recent paper by Kn\"upfer and Muratov \cite{KnMu2} in dimensions $n \leq 7$. There the non-local part of the energy is allowed to have a general Riesz potential structure $\int_{\R^n}\int_{\R^n} \frac{\chi_F(x)\chi_F(y)}{|x-y|^{\alpha}} \, dy\,  dx$  with $\alpha< n -1 $, which covers the case of the Newton potential. They also considered  the problem of unstability for the large masses. A somewhat similar approach to a slightly different problem can be found in \cite{CiSp}. 

We stress that this paper was done independently from  \cite{KnMu2}. Our approach is quite different and it is based on  a new quantitative isoperimetric inequality introduced in \cite{FJ}. In fact, the most important case $n=3$ is nothing but a direct corollary of Theorem \ref{SQII} stated below. We  need some extra work in the higher dimensional cases but the arguments remain standard. We choose to deal only with the  Newton potential, to avoid the simplicity of our idea getting buried under technicalities.  Moreover, Theorem \ref{mainthm} is, to  best of  the author's knowledge, the only result in higher dimensions.

Finally we note that the planar case was studied extensively in \cite{KnMu1} with a general Riesz potential in the non-local part of the  energy.  We remark that  Theorem \ref{mainthm} is not true in the plane, i.e., when the non-local part of the energy scales as the Newton potential. This is due to the fact that in the plane  the Newton potential behaves as $\log (1/r)$. Since $\log (1/r) \to - \infty$ as $r \to \infty$ one may consider a  family of sets consisting on two disks which are further and further from each other. It is then obvious, that  the value of the problem corresponding to \eqref{energy}, would always be $-\infty$.

\section{Proofs}

By scaling we may write the problem  as
\begin{equation}
\label{scale}
\min \left( P(E) + \lambda_m \int_{\R^n}\int_{\R^n} \frac{\chi_E(x)\chi_E(y)}{|x-y|^{n-2}} \, dy\,  dx \!:\,\,  |E|= |B_1| \right),
\end{equation}
where $\lambda_m =\left( \frac{m}{|B_1|} \right)^{\frac{3}{n}}$ and $m$ is the mass in the original problem. In future we prefer the sets to  have the measure of the unit ball and therefore we consider the problem \eqref{energy}  in  scaled form \eqref{scale}.

As mentioned in the introduction, the proof is based on a new type of quantitative isoperimetric inequality. Suppose that we have a set of finite perimeter $E$ such that $|E|= |B_1|$. We measure the oscillation of the boundary of $E$ by the assymmetry
\[
\beta(E) := \min_{y \in\R^n}\biggl\{  \biggr( \frac{1}{2}\int_{\partial^*E}|\nu_E(x)-\nu_{B_1(y)}(\pi_{y}(x))|^2\,d\Ha^{n-1}(x)\biggr)^{1/2} \biggr\}\,,
\] 
where $\nu_{B_1(y)}(\pi_{y}(x)) = \frac{x-y}{|x-y|}$ and $\partial^*E$ is the reduced boundary of $E$. For precise definitions and properties of the sets of finite perimeter we refer to \cite{AFP}. The quantitative isoperimetric inequality in strong form reads as follows.
\begin{theorem}
\label{SQII}
Suppose $n \geq 2$. There is a dimensional constant $c_n$ such that for every  set of finite perimeter $E \subset \R^n$ with $|E|= |B_1|$ it holds
\[
 P(E) - P(B_1)\, \geq c_n \, \beta(E)^2.
\]
\end{theorem}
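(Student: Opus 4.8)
The plan combines a soft reduction, a regularization step, and an explicit expansion on nearly spherical sets; I describe the three pieces in turn.

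\emph{Reductions.} Since $|\nu_E(x)-\tfrac{x-y}{|x-y|}|^2\le 4$ pointwise, one always has $\beta(E)^2\le 2P(E)=2P(B_1)+2\,D(E)$, where $D(E):=P(E)-P(B_1)$ is the isoperimetric deficit; hence if $D(E)\ge\delta_0$ for a fixed $\delta_0>0$ the claim holds with $c_n=(2+2P(B_1)/\delta_0)^{-1}$, and we may assume $D(E)$ as small as we wish. It is also convenient to reformulate $\beta$: expanding the square gives $\tfrac12|\nu_E(x)-\tfrac{x-y}{|x-y|}|^2=1-\langle\nu_E(x),\tfrac{x-y}{|x-y|}\rangle$, and since $\diver\tfrac{x-y}{|x-y|}=\tfrac{n-1}{|x-y|}$, the divergence theorem yields
\[
\beta(E)^2=P(E)-(n-1)\max_{y\in\R^n}\int_E\frac{dx}{|x-y|}\,.
\]
By the bathtub principle the ball centred at $y$ maximises $\int_F|x-y|^{-1}dx$ among $|F|=|B_1|$, with value $(n-1)^{-1}P(B_1)$, so $\beta(E)^2=D(E)+\bigl(P(B_1)-(n-1)\max_y\int_E|x-y|^{-1}dx\bigr)$ with a nonnegative second summand; thus the theorem is equivalent to the upper bound $P(B_1)-(n-1)\max_y\int_E|x-y|^{-1}dx\le C_n\,D(E)$ for $D(E)$ small.

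\emph{Reduction to nearly spherical sets.} The obstacle is that a set of small deficit need not be close to $B_1$ in any sense stronger than $L^1$, so $\partial E$ cannot be written as a graph over $S^{n-1}$. I would circumvent this by contradiction using the selection/regularization principle of Cicalese and Leonardi: if the inequality fails there are $E_j$ with $|E_j|=|B_1|$ and $D(E_j)/\beta(E_j)^2\to 0$, so $D(E_j)\to 0$ by the reduction. Replace each $E_j$ by a minimizer $F_j$ of a penalized isoperimetric functional (roughly $F\mapsto P(F)+\Lambda\,\big|\,|F\triangle E_j|-\eps_j\,\big|$ under $|F|=|B_1|$, with $\Lambda$ large and fixed). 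Such $F_j$ are uniform $\Lambda$-minimizers of the perimeter, hence enjoy uniform $C^{1,\alpha}$ regularity; being also a minimizing sequence for the isoperimetric problem, $F_j\to B_1$ in $L^1$, hence in $C^{1,\alpha}$. Thus for large $j$ each $F_j$ is a nearly spherical set $\partial F_j=\{(1+u_j(\omega))\omega:\omega\in S^{n-1}\}$ with $\|u_j\|_{C^1(S^{n-1})}\to 0$, $|F_j|=|B_1|$ and, after a translation (which changes neither $D$ nor $\beta$), barycentre at the origin. The penalization is tuned so that $D(F_j)\le C\,D(E_j)$ and $\beta(F_j)^2\ge c\,\beta(E_j)^2-o(D(E_j))$, so that still $D(F_j)/\beta(F_j)^2\to 0$.

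\emph{The nearly spherical case.} Here everything is explicit. From the area formula $P(E_u)=\int_{S^{n-1}}(1+u)^{n-2}\sqrt{(1+u)^2+|\nabla_\tau u|^2}\,d\Ha^{n-1}$ and $(n-1)\int_{E_u}|x|^{-1}dx=\int_{S^{n-1}}(1+u)^{n-1}\,d\Ha^{n-1}$, choosing the admissible centre $y=0$ we get
\[
\beta(E_u)^2\le P(E_u)-(n-1)\int_{E_u}\frac{dx}{|x|}=\int_{S^{n-1}}(1+u)^{n-1}\left(\sqrt{1+\frac{|\nabla_\tau u|^2}{(1+u)^2}}-1\right)d\Ha^{n-1},
\]
which, using $\sqrt{1+t}-1\le t/2$ and $\|u\|_{L^\infty}\to 0$, is at most $C_n\|\nabla_\tau u\|_{L^2(S^{n-1})}^2$. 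On the other hand Fuglede's expansion of the perimeter about $B_1$ gives $D(E_u)\ge c_n\|u\|_{H^1(S^{n-1})}^2\ge c_n\|\nabla_\tau u\|_{L^2(S^{n-1})}^2$: writing $u=\sum_k a_kY_k$ in spherical harmonics with eigenvalues $\lambda_k=k(k+n-2)$, the quadratic part of $D(E_u)$ is $\tfrac12\sum_k(\lambda_k-(n-1))a_k^2$ up to an $O(\|u\|_{C^1}\|u\|_{H^1}^2)$ error; the volume constraint forces $|a_0|\le C\|u\|_{H^1}^2$, the barycentre condition forces $|a_1|\le C\|u\|_{H^1}^2$, and for $k\ge 2$ one has $\lambda_k-(n-1)\ge\tfrac{n+1}{2n}\lambda_k$, which yields the bound. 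Combining, $D(E_u)\ge c_n'\,\beta(E_u)^2$; applied to $F_j$ this contradicts $D(F_j)/\beta(F_j)^2\to 0$ (and, run quantitatively instead of by contradiction, it produces the dimensional constant $c_n$).

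\emph{Where the difficulty lies.} The only non-routine step is the regularization: one must choose the penalized functional so that its minimizers are genuine $\Lambda$-minimizers with uniform regularity, and — more delicately — control how the \emph{oscillation} asymmetry $\beta$ behaves under the passage from $E_j$ to $F_j$. Unlike the Fraenkel asymmetry, $\beta$ is built from the reduced boundary and its measure-theoretic normal, so its near-stability has to be extracted from $C^{1,\alpha}$ convergence rather than from $L^1$ proximity alone; arranging the bookkeeping so that the bad ratio $D/\beta^2\to 0$ is preserved is the heart of the matter. Once the problem is reduced to nearly spherical sets, the conclusion is the elementary spectral estimate above.
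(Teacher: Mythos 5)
The paper does not prove Theorem~\ref{SQII}; it imports it from the reference \cite{FJ} (Fusco--Julin), and the only manipulation carried out here is the divergence-theorem rewriting of $\beta^2$ into the form \eqref{pot_form} via $\gamma$. Your proposal reconstructs precisely the scheme of \cite{FJ}: (a) the rewriting $\beta(E)^2 = D(E) + g(E)$ with $g(E) := P(B_1) - (n-1)\max_y\int_E |x-y|^{-1}\,dx$, (b) a Cicalese--Leonardi penalization/selection principle that replaces an alleged counterexample by $\Lambda$-minimizers converging to $B_1$ in $C^{1,\alpha}$, and (c) a Fuglede-type spherical-harmonic expansion on nearly spherical sets; the computations you give for (a) and (c) are correct, including the value $(n-1)^{-1}P(B_1)$ from the bathtub principle, the inequality $\beta(E_u)^2 \le C_n\|\nabla_\tau u\|_{L^2}^2$, and the spectral lower bound on $D(E_u)$.

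The one place you misdiagnose the difficulty is step (b). You write that the near-stability of $\beta$ under the passage $E_j \rightsquigarrow F_j$ ``has to be extracted from $C^{1,\alpha}$ convergence rather than from $L^1$ proximity alone,'' but your own reduction dissolves this. Since $g$ is (up to the supremum in $y$) a linear functional of $\chi_E$ with an integrable kernel, it is Lipschitz in $L^1$ on sets contained in a fixed ball. So in the selection argument one should track $g$, not $\beta$: if the penalized minimizers satisfy $D(F_j)\le D(E_j)$ (true because $E_j$ itself is a competitor for $P+\Lambda\bigl|\,|\cdot\triangle E_j|-\eps_j\bigr|$) and $|F_j\triangle E_j|\to 0$ (forced by the penalization once $\Lambda$ is large and the $F_j$ converge to $B_1$), then $g(F_j)\ge g(E_j)-o(1)$, and $\beta(F_j)^2=D(F_j)+g(F_j)\ge g(E_j)-o(1)=\beta(E_j)^2-D(E_j)-o(1)$, which together with $D(F_j)\le D(E_j)\to 0$ and $D(E_j)/\beta(E_j)^2\to 0$ still yields $D(F_j)/\beta(F_j)^2\to 0$. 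Articulating the penalization through $g$ (equivalently $\gamma$ of \eqref{gamma}) rather than $\beta$ is exactly how the $L^1$-discontinuity of $\beta$ is sidestepped; with that bookkeeping corrected, and modulo the routine verification that the $F_j$ are nondegenerate so that the Fuglede step is not vacuous, your proposal matches the argument of \cite{FJ}.
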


Notice that by the divergence theorem we may write 
\[
\begin{split}
\frac{1}{2} \int_{\partial^*E}&|\nu_E(x)-\nu_{B_1(y)}(\pi_{y}(x))|^2\, d\Ha^{n-1}(x) = \frac{1}{2} \int_{\partial^*E}\big|\nu_E(x)-\frac{x-y }{|x-y |}\big|^2\,d\Ha^{n-1}(x) \\
&=   \int_{\partial^*E}\left( 1- \nu_E(x) \cdot \frac{x-y }{|x-y |}\right) \,d\Ha^{n-1}(x) \\
&=  P(E)   - \int_{E}\frac{n-1 }{|x-y |} \,d\Ha^{n-1}(x)  \\
&=  P(E)   -P(B_1)  +  \int_{B_1(y)}\frac{n-1 }{|x-y |} \,d\Ha^{n-1}(x) - \int_{E}  \frac{n-1 }{|x-y |} \,d\Ha^{n-1}(x) \, .
\end{split}
\]
Therefore by defining an  assymmetry  
\begin{equation}
\label{gamma}
 \gamma(E) :=\min_{y\in\R^n} \left( \int_{B_1(y)} \frac{n-1}{|x-y|} dx - \int_{E} \frac{n-1}{|x-y|} dx  \right) ,
\end{equation}
Theorem \ref{SQII} can be written as
\begin{equation}
\label{pot_form}
 P(E) - P(B_1)\, \geq C_n \, \gamma(E),
\end{equation}
for every set of finite perimeter $E \subset \R^n$ with $|E| = |B_1|$, where $C_n = \frac{c_n}{1-c_n}$.

We are ready to prove Theorem \ref{mainthm}. The proof in dimension three turns out to be the easiest. This is simply due to the fact that the asymmetry \eqref{gamma} scales exactly as the Newton potential. The proof also gives a nice bound for the critical mass $m_3 \geq \frac{1}{C_3}$ where $C_3$ is the constant from \eqref{pot_form}. Unfortunately the proof of \eqref{pot_form} in \cite{FJ} doesn't give any explicit bound for the constant $C_3$. The higher dimensional cases are proven simlarly by using Lemma \ref{induction} iteratively. 

\begin{proof}[\textbf{Proof of the Theorem \ref{mainthm} in the case $n=3$.}]
We begin the  proof in a general dimension $n\geq 3$ and specify to the case $n=3$ only at the end. 

Consider the problem in the scaled form \eqref{scale}. Suppose $E \subset \R^n$ is such that $|E| = |B_1|$. Since the problem is translation invariant we may assume  $E$ is centered at the origin, i.e., 
\[
\gamma(E) = \int_{B_1} \frac{n-1}{|x|} dx - \int_{E} \frac{n-1}{|x|} dx.
\] 
Write $\text{NL}(E) := \int_{\R^n}\int_{\R^n} \frac{\chi_E(x)\chi_E(y)}{|x-y|^{n-2}} \, dy\,  dx$. We need to show that
\[
 \text{NL}(B_1) -\text{NL}(E) \leq C\, (P(E)- P(B_1)) 
\]  
for some dimensional constant $C>0$. We organize the terms 
\begin{equation}
\label{compare}
\begin{split}
 \text{NL}(B_1) &-\text{NL}(E) \\
&=  \int_{\R^n}\int_{\R^n}\frac{\chi_{B_1}(x)\chi_{B_1}(y)}{|x-y|^{n-2}} \, dy\,  dx- \int_{\R^n}\int_{\R^n}\frac{\chi_E(x)\chi_E(y)}{|x-y|^{n-2}}\, dy\,  dx  \\
&=2  \int_{\R^n}\int_{\R^n} \frac{\chi_{B_1}(y) ( \chi_{B_1}- \chi_{E})(x) }{|x-y|^{n-2}} \, dx\,  dy \\
&\qquad - \int_{\R^n}\int_{\R^n} \frac{ (\chi_{B_1} - \chi_{E})(y) (\chi_{B_1} - \chi_{E})(x)}{|x-y|^{n-2}}  \, dx\,  dy .
\end{split}
\end{equation}

Define 
\[
v(y) = \int_{\R^n} \frac{ (\chi_{B_1} - \chi_{E})(x)}{|x-y|^{n-2}}  \, dx,
\]
which solves 
\[
- \Delta v =  \tilde{c}_n \, (\chi_{B_1} - \chi_{E})
\]
for some dimensional constant $ \tilde{c}_n >0$. Therefore 
\[
\begin{split}
 &\int_{\R^n}\int_{\R^n} \frac{ (\chi_{B_1} - \chi_{E})(y) (\chi_{B_1} - \chi_{E})(x)}{|x-y|^{n-2}}  \, dx\,  dy \\
&= - \frac{1}{ \tilde{c}_n } \int_{\R^3} v(y)  \, \Delta v(y)\,  dy \\
&= \frac{1}{ \tilde{c}_n}   \int_{\R^3} | \nabla v|^2\,  dy \geq 0.
\end{split}
\]
This and \eqref{compare}  yield
\begin{equation}
\label{bound1}
 \text{NL}(B_1) - \text{NL}(E) \leq 2  \int_{\R^n}\int_{\R^n} \frac{\chi_{B_1}(y) ( \chi_{B_1}- \chi_{E})(x) }{|x-y|^{n-2}} \, dx\,  dy = 2 \int_{B_1} v(y)\, dy.
\end{equation}

Assume now that $n=3$. Then $v(y)=  \int_{\R^n} \frac{(\chi_{B_1} - \chi_{E})(x)}{|x-y|} \, dx$. Since $v$ solves $- \Delta v =  \tilde{c}_3\, (\chi_{B_1} - \chi_{E}),$ it is superharmonic in $B_1$ and therefore, by the mean value property, we conclude that 
\[
2 \fint_{B_1} v(y)\, dy \leq 2 v(0)= 2 \int_{\R^n} \frac{(\chi_{B_1} - \chi_{E})(x)}{|x|} \, dx = \gamma(E).
\]
Hence, by \eqref{bound1} and  \eqref{pot_form}, we have
\[
\text{NL}(B_1) -\text{NL}(E)  = |B_1|  \, \gamma(E) \leq  \frac{|B_1| }{C_3} \, (P(E) - P(B_1))
\]
which concludes the proof in the case $n=3$.

\end{proof}

The above proof contains all the relevant arguments for the higher dimensional case. Only  the last step, where we used the mean value property, doesn't  generalize since the inequality of the type
\[
v(0) =  \int_{\R^n} \frac{(\chi_{B_1} - \chi_{E})(x)}{|x|^{n-2}} \lesssim  P(E) - P(B_1)
\]
is not true when $n \geq 4$.  Here the notation ''$\lesssim$'' means that the above inequality is true up to a multiplication with a positive dimensional constant. This can be easily  seen by considering a family of annuli $E_{\eps} = B_{R_{\eps}} \setminus B_{\eps}$,  where $R_{\eps}= \sqrt[n]{1 + \eps^n}$. We need to execute slightly more careful analysis. To that end we recall some well known definitions and results.

Suppose that $f : \R^n \to \R$ is a locally integrable function. The Riesz potential $I_{\alpha}f$ of $f$, for $\alpha \in (0, n)$, is defined by
\begin{equation}
\label{Riesz}
I_{\alpha}f(x):=\sigma_{n , \alpha} \int_{\R^n} \frac{f(\xi)}{|x- \xi|^{n-\alpha}} \, d\xi,
\end{equation}
where the normalization constant is given by
\[
\frac{1}{\sigma_{n , \alpha}}  = \pi^{n/2} 2^{\alpha} \frac{\Gamma \left( \alpha/2\right)}{\Gamma \left( (n -\alpha)/2\right)}.
\]
For the Riesz potential we have the semigroup property
\begin{equation}
\label{semigroup}
I_{\alpha_1}(I_{\alpha_2} f) =  I_{\alpha_1 + \alpha_2} f  \qquad \text{if }\, \alpha_1, \alpha_2 \in (0, n)\, ,\text{s.t.} \quad \alpha_1 + \alpha_2 < n 
\end{equation}
and
\begin{equation}
\label{Rieszlaplace}
- \Delta (I_{\alpha +2}f )= I_{\alpha}f \qquad \text{if }\, \alpha \in (0, n-2).
\end{equation}

We need the following simple  lemma.

\begin{lemma}
\label{induction}
Suppose that $n\geq 5$, $\alpha \in [2, n-3]$ and $E \subset \R^n$ is a set of finite perimeter with $|E|= |B_1|$. Denote  $f_E = \chi_{B_1} - \chi_{E}$. If the function $\phi_{\alpha} : (0, \infty) \to \R$
\[
\phi_{\alpha}(r) := \fint_{\partial B_r} (I_{\alpha}f_E)(x) \, d \Ha^{n-1}(x)
\]
is decreasing, then it holds that
\begin{enumerate}
\item[(i)]  $\phi_{\alpha+2} : (0, \infty) \to \R$
\[
\phi_{\alpha+2} (r) := \fint_{\partial B_r} (I_{\alpha+2}f_E)(x) \, d \Ha^{n-1}(x)
\]
is decreasing,
\item[(ii)] 
\[
\fint_{B_1} (I_{\alpha}f_E)(x) \, dx \leq 2 n(n+2)\fint_{B_1} (I_{\alpha+2}f_E)(x) \, dx,
\]
\item[(iii)] and
\[
\fint_{B_1} (I_{\alpha}f_E)(x) \, dy \leq 2n \,  (I_{\alpha +2}f_E)(0) .
\]
\end{enumerate}
\end{lemma}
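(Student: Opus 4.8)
\textbf{Proof plan for Lemma \ref{induction}.}

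The three statements are linked by the identity $-\Delta(I_{\alpha+2}f_E)=I_\alpha f_E$ from \eqref{Rieszlaplace}, so the whole lemma is really an exercise about the spherical averages of a function whose (negative) Laplacian has decreasing spherical averages. First I would record the elementary radial ODE satisfied by $\phi_{\alpha+2}$: writing $u=I_{\alpha+2}f_E$ and $\psi=I_\alpha f_E$, averaging $-\Delta u=\psi$ over $\partial B_r$ and using the divergence theorem gives the classical relation
\[
-\bigl(r^{n-1}\phi_{\alpha+2}'(r)\bigr)' = r^{n-1}\phi_\alpha(r),
\]
equivalently $\phi_{\alpha+2}'(r) = -\,r^{1-n}\int_0^r s^{n-1}\phi_\alpha(s)\,ds$. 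Two points must be checked to make this rigorous: that $u$ decays at infinity (true since $\alpha+2<n$ and $f_E$ has compact support and zero integral, giving even faster decay), and that the averages are finite and differentiable in $r$ (Riesz potentials of bounded compactly supported functions are continuous, in fact Hölder, so this is routine). Given the ODE, part (i) is immediate: $\phi_\alpha\ge 0$? — no, $\phi_\alpha$ need not be nonnegative, but $\phi_\alpha$ is \emph{decreasing}, and $\phi_\alpha(r)\to 0$ as $r\to\infty$ (again by the decay of $I_\alpha f_E$), so $\phi_\alpha\ge 0$ on all of $(0,\infty)$; hence $\phi_{\alpha+2}'(r)\le 0$, which is (i). This ``decreasing plus vanishing at infinity implies nonnegative'' observation is the small trick that makes everything go.

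For parts (ii) and (iii) I would integrate the ODE more carefully. Note $\fint_{B_1}u\,dx = \frac{n}{|B_1|}\int_0^1 r^{n-1}\phi_{\alpha+2}(r)\,dr$ and similarly for $\psi$, so both sides of (ii) are weighted radial integrals of $\phi_{\alpha+2}$ and $\phi_\alpha$. From $\phi_{\alpha+2}'(r)=-r^{1-n}\int_0^r s^{n-1}\phi_\alpha(s)\,ds$ and $\phi_{\alpha+2}(r)=\phi_{\alpha+2}(0)-\int_0^r\phi_{\alpha+2}'(t)\,dt$ one gets, after switching the order of integration,
\[
\phi_{\alpha+2}(r) = \phi_{\alpha+2}(0) - \int_0^r \phi_\alpha(s)\,s^{n-1}\!\!\int_s^r t^{1-n}\,dt\,ds.
\]
Since $\phi_\alpha\ge 0$, this already shows $\phi_{\alpha+2}(r)\le \phi_{\alpha+2}(0)=u(0)$, and in fact one can extract a quantitative lower bound for $\phi_{\alpha+2}(0)-\phi_{\alpha+2}(r)$ in terms of $\int_0^r s^{n-1}\phi_\alpha(s)\,ds$. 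Concretely, for $r\le 1$ the inner integral $\int_s^r t^{1-n}\,dt \ge \frac{1}{n-2}(s^{2-n}-r^{2-n})$, and a crude comparison of the two averaged quantities — using monotonicity of $\phi_\alpha$ to compare $\fint_{B_1}\psi$ with, say, $\phi_\alpha(1/2)$ or $\int_0^1 s^{n-1}\phi_\alpha(s)\,ds$, and monotonicity of $\phi_{\alpha+2}$ to bound $\fint_{B_1}u$ from below by $\phi_{\alpha+2}(1)$ — produces an inequality of exactly the form (ii) with a dimensional constant; tracking the constants through the elementary $r$-integrals is what yields the explicit $2n(n+2)$. For (iii) the same computation at $r\to\infty$ gives $u(0)=\phi_{\alpha+2}(0)=\int_0^\infty \phi_\alpha(s)\,s^{n-1}\bigl(\int_s^\infty t^{1-n}\,dt\bigr)\,ds = \frac{1}{n-2}\int_0^\infty s\,\phi_\alpha(s)\,ds$, and again bounding $\fint_{B_1}\psi = \frac{n}{|B_1|}\int_0^1 s^{n-1}\phi_\alpha(s)\,ds$ against this using $\phi_\alpha\ge 0$ and its monotonicity gives (iii) with constant $2n$.

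The main obstacle is not conceptual but bookkeeping: one has to choose the comparison radii and the splitting of the $r$-integrals so that the monotonicity of $\phi_\alpha$ (used to say the ``outer'' part of the integral is controlled by the ``inner'' part) and the monotonicity of $\phi_{\alpha+2}$ (used to bound the average by a boundary value) combine to give precisely the stated dimensional constants rather than merely \emph{some} constant. I would also take care with the hypothesis range $\alpha\in[2,n-3]$: it guarantees $\alpha\ge 2>0$ and $\alpha+2\le n-1<n$ so that both Riesz potentials and the identity \eqref{Rieszlaplace} are legitimate, and $n\ge 5$ is exactly what is needed for the interval $[2,n-3]$ to be nonempty. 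A secondary technical point worth a line is justifying differentiation under the spherical-average sign and the decay $\phi_\alpha(r),\phi_{\alpha+2}(r)\to 0$; both follow from $f_E\in L^\infty$ with compact support together with $\int_{\R^n}f_E=0$, which improves the decay of $I_\alpha f_E$ from $|x|^{\alpha-n}$ to $|x|^{\alpha-n-1}$ and in particular makes all the radial integrals above convergent.
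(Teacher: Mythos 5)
Your overall framework is the same as the paper's: average the Poisson equation $-\Delta(I_{\alpha+2}f_E)=I_\alpha f_E$ over spheres to get the radial ODE $\phi_{\alpha+2}'(r)=-r^{1-n}\int_0^r s^{n-1}\phi_\alpha(s)\,ds$; deduce $\phi_\alpha\ge 0$ from ``decreasing plus vanishing at infinity'' (which is precisely how the paper handles (i)); then integrate and compare weighted radial integrals for (ii) and (iii). Part (i) is fine. Two small corrections: $E$ need only have finite measure, not be bounded, so $f_E$ need not have compact support — the decay of $I_\alpha f_E$ and $I_{\alpha+2}f_E$ at infinity still holds because $f_E\in L^1\cap L^\infty$ with $\int f_E=0$, but the justification you gave isn't quite the right one; and $\fint_{B_1}\psi = n\int_0^1 s^{n-1}\phi_\alpha(s)\,ds$, without the extra $1/|B_1|$.

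The genuine gap is in how you propose to close (ii) and (iii). The comparison ``bound $\fint_{B_1}u$ from below by $\phi_{\alpha+2}(1)$'' cannot give (ii) on its own: $\phi_{\alpha+2}(1)$ is nonnegative but has no other useful lower bound, so you would still need a pointwise lower bound on $\phi_{\alpha+2}(r)$ over $(0,1)$ before integrating against $r^{n-1}$. The paper gets this via a single additional observation you haven't made explicit: because $\phi_\alpha$ is decreasing, the \emph{ball} average $\Phi_\alpha(s):=\fint_{B_s}I_\alpha f_E\,dx$ is also decreasing and nonnegative, so in the ODE $\phi_{\alpha+2}'(s)=-\tfrac{s}{n}\Phi_\alpha(s)$ one may freeze $\Phi_\alpha(s)\ge\Phi_\alpha(1)$ for $s\le 1$. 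Integrating $\phi_{\alpha+2}'(s)\le -\tfrac{s}{n}\Phi_\alpha(1)$ over $[r,1]$ gives the clean pointwise bound $\phi_{\alpha+2}(r)\ge \tfrac{\Phi_\alpha(1)}{2n}(1-r^2)$; letting $r\to 0$ yields (iii) with constant $2n$, and multiplying by $r^{n-1}$ and integrating yields (ii) (in fact with constant $n(n+2)$, slightly better than the stated $2n(n+2)$). Your global identity $u(0)=\tfrac{1}{n-2}\int_0^\infty s\phi_\alpha(s)\,ds$ is correct but leads to messier comparisons (one can make it work for (iii) via Chebyshev's integral inequality comparing $\int_0^1 s^{n-1}\phi_\alpha$ to $\int_0^1 s\phi_\alpha$, which gives $2(n-2)\le 2n$, but this route does not obviously produce (ii) and you would be reproving the $\Phi_\alpha$ monotonicity in disguise). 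So: same approach as the paper, but you are missing the one-line observation that is what actually makes the constants drop out.
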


\begin{proof}
The proof is standard. Define $\Phi_{\alpha}: [0, \infty) \to \R$ as 
\[
\Phi_{\alpha}(r) :=  \fint_{B_r}  (I_{\alpha}f_E)(x) \, dx .
\]
Then it holds
\[
\Phi_{\alpha}(r) = \int_0^r \int_{\partial B_s}    (I_{\alpha}f_E)(x) \, d \Ha^{n-1}(x) ds = n |B_1| \int_0^r s^{n-1} \phi_{\alpha}(s)  ds. 
\]
Since $\phi_{\alpha}$ is decreasing, also $\Phi_{\alpha}$ is decreasing. In particular, since $\lim_{|x| \to \infty}  (I_{\alpha}f_E)(x) = 0$, $\phi_{\alpha}$ and $\Phi_{\alpha}$ are non-negative. Therefore, since $-\Delta (I_{\alpha +2}f_E)= (I_{\alpha}f_E)  $ by \eqref{Rieszlaplace}, we have that 
\begin{equation}
\label{standard}
\phi_{\alpha+2} '(s) = \frac{s}{n}\fint_{B_s} (I_{\alpha +2}f_E) (x) \, dx= -  \frac{s}{n}\fint_{B_s} (I_{\alpha }f_E)(x) \, dx = - \frac{s}{n} \Phi_{\alpha}(s) 
\end{equation}
and we conclude that $\phi_{\alpha+2}$ is decreasing and $(i)$ follows. Since  $\lim_{|x| \to \infty} (I_{\alpha +2}f_E)(x) = 0$, $\phi_{\alpha+2}$ is also non-negative.

Since $\Phi_{\alpha}$ is decreasing and non-negative we obtain from \eqref{standard} that  for every $s \in (0,1)$ it holds
\[
\phi_{\alpha+2}'(s) \leq - \frac{s}{n} \Phi_{\alpha}(1) . 
\]
Let $r \in (0,1)$. Integrate the above inequality over $[r,1]$ to obtain
\[
\phi_{\alpha+2}(1) -\phi_{\alpha+2}(r)= \int_r^1\phi_{\alpha+2}'(s)\, ds \leq - \frac{\Phi_{\alpha}(1)}{n}  \int_r^1 s\, ds =  - \frac{\Phi_{\alpha}(1)}{2n} (1- r^2).
\]
This implies
\begin{equation}
\label{standard2}
\phi_{\alpha+2}(r)   \geq \phi_{\alpha+2}(1)  +  \frac{\Phi_{\alpha}(1)}{2n} (1- r^2) \geq \frac{\Phi_{\alpha}(1)}{2n} (1- r^2)  .
\end{equation}
Notice that $\lim_{r \to 0}\phi_{\alpha+2}(r) = (I_{\alpha +2}f_E)(0)$ and therefore \eqref{standard2} implies 
\[
(I_{\alpha +2}f_E)(0) \geq  \frac{1}{2n}\Phi_{\alpha}(1)= \frac{1}{2n}  \fint_{B_1} (I_{\alpha }f_E)w_{\alpha}(x) \, dx 
\]
and $(iii)$ follows.

Claim $(ii)$ follows also from \eqref{standard2}, since
\[
\begin{split}
\int_{B_1} (I_{\alpha +2}f_E)(x) \, dx &=  \int_0^1 \int_{\partial B_r}  (I_{\alpha +2}f_E)(x) \, d \Ha^{n-1}(x) \,dr \\
&= n|B_1| \int_0^1 r^{n-1} \phi_{\alpha+2}(r)  dr \\
&\geq  \frac{|B_1|}{2} \, \Phi_{\alpha}(1) \int_0^1  (1- r^2) r^{n-1} \, dr \\
&=  \frac{|B_1|}{2n(n+2)} \Phi_{\alpha}(1) .
\end{split}
\]

\end{proof}

For the even dimensional cases we also need the fractional Laplacian with a fractional power $ \frac{1}{2}$. Suppose that $f:\R^n \to \R$ is an integrable function such that 
\begin{equation}
\label{int-condition}
\int_{\R^n} \frac{|f(x)|}{(1+ |x|)^{n+1}} \, dx < \infty.
\end{equation}
The $\frac{1}{2}$-Laplacian of $f$ is defined by
\[
(-\Delta)^{\frac{1}{2}}f(x) := \bar{\sigma}_{n} \int_{\R^n} \frac{f(x) - f(\xi)}{|x- \xi|^{n+1}} \, d \xi,
\]
where $\bar{\sigma}_{n} $ is some  normalization constant. The $\frac{1}{2}$-Laplacian can be considered as the inverse of the Riesz potential $I_1$, i.e.,  if $f:\R^n \to \R$ satisfies \eqref{int-condition} we have that 
\begin{equation}
\label{fraclaplacian}
(-\Delta)^{\frac{1}{2}}(I_1f) = f.
\end{equation}

The  $\frac{1}{2}$-Laplacian can also be obtained by extension problem. Denote $\R_+^{n+1} = \{ (x,z) \in \R^{n+1} \mid z>0 \}$.  Suppose that  $f:\R^n \to \R$ is a bounded function. Solve the problem
\[
\left\{
\begin{aligned}
-\Delta u(x,z) &= 0 &&\text{for }\,  (x, z) \in \R_+^{n+1}  \\
u(x, 0) &= f(x)  &&\text{for  }\, x\in \R^n. \\
\end{aligned}
\right.
\]
Then it holds 
\begin{equation}
\label{extension}
- u_z(x,0)= (-\Delta)^{\frac{1}{2}}f(x).
\end{equation}
For further details and results about the fractional Laplacian with a general fractional power we refer to \cite{CS} and \cite{Sil}.

\begin{proof}[\textbf{Proof of the Theorem \ref{mainthm} in dimensions $n \geq 4$.}]

The proof begins exactly as in the case $n=3$. We consider the problem in the form  \eqref{scale}. We  may assume that $E \subset \R^n$, $|E|= |B_1|$,  is such that 
\[
\gamma(E) = \int_{B_1} \frac{n-1}{|x|} dx - \int_{E} \frac{n-1}{|x|} dx.
\] 
We need to show that
\[
 \text{NL}(B_1) -\text{NL}(E) \lesssim P(B_1) -P(E)
\]  
where  $\text{NL}(E) := \int_{\R^n}\int_{\R^n} \frac{\chi_E(x)\chi_E(y)}{|x-y|^{n-2}} \, dy\,  dx$. Recall that ''$\lesssim$'' means that the inequality is true up to a positive constant. In fact, in the beginning of the proof in $n=3$ we have  already proven that 
\begin{equation}
\label{alrknown}
 \text{NL}(B_1) -\text{NL}(E) \leq  2 \int_{B_1} v(y)\, dy,
\end{equation}
where  
\begin{equation}\label{def_v}
v(y) = \int_{\R^n} \frac{ (\chi_{B_1} - \chi_{E})(x)}{|x-y|^{n-2}}  \, dx,
\end{equation}
which solves 
\[
- \Delta v =   \tilde{c}_n \, (\chi_{B_1} - \chi_{E})
\]
for some $ \tilde{c}_n >0$.  Define 
 $\phi : (0, \infty) \to \R$
\begin{equation}
\label{def_phi}
\phi(r) :=  \fint_{\partial B_r} v(y) \, d \Ha^{n-1}(y).
\end{equation}
For every $r > 0$ it holds 
\[
\phi'(r) = \frac{r}{n} \fint_{B_r} \, \Delta v (y) dy = -\frac{\tilde{c}_n  r}{n}\fint_{B_r} (\chi_{B_1} - \chi_{E})(y) \, dy \leq 0
\]
and therefore $\phi$ is decreasing.

We divide the proof in two cases whether the dimension $n$ is odd or even.

\smallskip

\textbf{ The case when $n \geq 5$ is odd :}

Denote 
\[
f_E = \chi_{B_1}- \chi_E. 
\]
Consider the functions $I_2 f_E,  I_4 f_E, I_6  f_E,  \dots, I_{n-1} f_E$, where  $I_{\alpha}$ is the Riesz potential defined in \eqref{Riesz}.  In other words
\[
(I_{2k}f_E)(y) =\sigma_{n , 2k}  \int_{\R^n} \frac{ (\chi_{B_1} - \chi_{E})(x)}{|x-y|^{n-2k}}  \, dx \qquad \text{for }\, k = 1,2, \dots \frac{n-1}{2}.
\]
By \eqref{Rieszlaplace} we have that $-\Delta (I_{2k +2}f_E) = I_{2k}f_E $ for every $k = 1, 2, \dots, \frac{n-3}{2}$ and $I_2 f_E =  \sigma_{n , 2} \, v$ , where $v$ is defined in \eqref{def_v}. Define also $\phi_{2}, \phi_{4}, \dots, \phi_{n-1}: (0, \infty) \to \R$ such that
\[
\phi_{2k}(r) :=  \fint_{\partial B_r} (I_{2k}f_E)(y) \, d \Ha^{n-1}(y)\qquad \text{for }\, k = 1,2, \dots \frac{n-1}{2}.
\]
Notice that $\phi_{2}=  \sigma_{n , 2} \, \phi$, where  $\phi$ is defined in \eqref{def_phi}. Since $\phi$ is decreasing also $\phi_{2}$ is decreasing. 

We may use Lemma \ref{induction} $(i)$ and $(ii)$  iteratively for  $I_2 f_E, I_4 f_,  \dots, I_{n-3} f_E$ to obtain 
\[
 \fint_{B_1}(I_2 f_E)(y)\, dy \lesssim  \fint_{B_1}  (I_{n-3} f_E)(y)\, dy
\]
and that $\phi_{n-3}$ is decreasing. In other words 
\begin{equation}
\label{after_induc}
 \int_{B_1} v(y)\, dy=  \int_{B_1} \int_{\R^n} \frac{(\chi_{B_1} - \chi_{E})(x)}{|x-y|^{n-2}} \, dx dy \lesssim  \int_{B_1} \int_{\R^n}  \frac{(\chi_{B_1} - \chi_{E})(x)}{|x-y|^{3}} \, dx dy.
\end{equation}
Finally we use Lemma \ref{induction} $(iii)$ with $\alpha = n -3$, i.e., to functions $I_{n-3} f_E$ and $I_{n-1} f_E$, to obtain 
$ \fint_{B_1}  (I_{n-3} f_E) (y) \, dy \leq 2n\,  (I_{n-1} f_E)(0)$. This, in turn, implies 
\[
 \int_{B_1}  \int_{\R^n}   \frac{(\chi_{B_1} - \chi_{E})(x)}{|x-y|^{3}} \, dxdy \lesssim  \int_{\R^n} \frac{(\chi_{B_1} - \chi_{E})(x)}{|x|} \, dx .
\]
The previous inequality and \eqref{after_induc} imply
\[
 \fint_{B_1} v(y)\, dy \lesssim \int_{\R^n} \frac{(\chi_{B_1} - \chi_{E})(x)}{|x|} \, dx  = \frac{\pot(E)}{n-1} .
\]
Combining the above estimate with \eqref{alrknown} and \eqref{pot_form}, we get
\[
 \text{NL}(B_1) -\text{NL}(E) \leq  2 \int_{B_1} v(y)\, dy \lesssim \pot(E) \lesssim  P(E) - P(B_1)
\]
and the claim follows.

\smallskip

\textbf{ The case when $n \geq 4$ is even:}

Denote 
\[
f_E = \chi_{B_1}- \chi_E
\]
and consider functions $I_2 f_E,  I_4 f_E, I_6  f_E,  \dots, I_{n-2} f_E$, i.e., 
\[
( I_{2k} f_E)(y) =\sigma_{n , 2k}  \int_{\R^n} \frac{ (\chi_{B_1} - \chi_{E})(x)}{|x-y|^{n-2k}}  \, dx \qquad \text{for }\, k = 1,2, \dots \frac{n-2}{2}
\]
and functions $\phi_{2}, \phi_{4}, \dots, \phi_{n-2}: (0, \infty) \to \R$ such that
\[
\phi_{2k}(r) :=  \fint_{\partial B_r} ( I_{2k} f_E)(y) \, d \Ha^{n-1}(y) \qquad \text{for }\, k = 1,2, \dots \frac{n-2}{2}.
\]
Again $\phi_{2}$ is decreasing and we may use Lemma \ref{induction} $(i)$ and $(ii)$  iteratively for   $I_2 f_E,  I_4 f_E,   \dots, I_{n-2} f_E$ to obtain 
\[
 \fint_{B_1} (I_2 f_E)(y)\, dy \lesssim  \fint_{B_1} (I_{n-2} f_E)(y)\, dy,
\]
and that $\phi_{n-2}$ is decreasing (and non-negative).  In other words 
\begin{equation}
\label{even-ind}
 \int_{B_1} v(y)\, dy=  \int_{B_1}  \int_{\R^n} \frac{(\chi_{B_1} - \chi_{E})(x)}{|x-y|^{n-2}} \, dxdy \lesssim    \int_{B_1} (I_{n-2} f_E)(y)\, dy.
\end{equation}
To conclude the proof we need to show that 
\begin{equation}
\label{to-show}
 \int_{B_1} (I_{n- 2}f_E)(y)\, dy \lesssim  (I_{n- 1}f_E)(0) .
\end{equation}
The inequalities   \eqref{even-ind}  and \eqref{to-show} would imply 
\[
\int_{B_1} v(y)\, dy  \lesssim  (I_{n- 1}f_E)(0) = \sigma_{n, n-1} \int_{\R^n} \frac{(\chi_{B_1} - \chi_{E})(x)}{|x|} \, dx  \lesssim \pot(E)
\]
and the claim would follow from  \eqref{alrknown} and \eqref{pot_form}.

To prove \eqref{to-show} we use the $\frac{1}{2}$-Laplacian. By the semigroup property \eqref{semigroup} it holds $ I_{n-1}(f_{E}) = I_1 (I_{n-2}f_{E})$ and therefore by \eqref{fraclaplacian} we have
\[
 (- \Delta)^{\frac{1}{2}}(I_{n- 1} f_E) = I_{n-2}f_{E}.
\]
Suppose that  $u: \R_+^{n+1} \to \R$ is the harmonic extension of $I_{n- 1}f_E$, i.e.,
\[
\left\{
\begin{aligned}
-\Delta u(y, z) &= 0 &&\text{for }\, (y, z) \in \R_+^{n+1}  \\
u(y, 0) &= I_{n- 1}f_E(y)  &&\text{for  }\, y\in \R^n. \\
\end{aligned}
\right.
\]
Then $-  u_z(y,0) =  (- \Delta)^{\frac{1}{2}}(I_{n- 1}f_E)(y) = (I_{n- 2}f_E)(y)$ for $y \in \R^n$.

Extend $u$ to $\R^{n+1}$ by reflection, i.e., define $\tilde{u} : \R^{n+1} \to \R$ such that
\[
\left\{
\begin{aligned}
\tilde{u}(y, z) &=  u(y, z)  &&\text{for }\, z\geq 0  \\
\tilde{u}(y, z) &=  u(y, -z)   &&\text{for  }\, z<0. \\
\end{aligned}
\right.
\]
Denote $Y= (y, z) \in \R^{n+1}$ and denote by $\hat{B}_r$ the $(n+1)$-dimensional ball with radius $r$ and $|\hat{B}_1|= \omega_{n+1}$. Define further $\phi_{\tilde{u}}: (0, \infty) \to \R$ such that 
\[
\phi_{\tilde{u}}(r) := \fint_{\partial \hat{B}_r} \tilde{u}(Y) \, d \Ha^{n}(Y).
\] 
By the divergence theorem we get
\begin{equation} \label{divergence}
\begin{split}
\phi_{\tilde{u}}'(r) &=\fint_{\partial \hat{B}_r} \langle \nabla \tilde{u}(Y), \frac{Y}{r} \rangle  \, d \Ha^{n}(Y)\\
&=  \frac{2}{(n+1)\, \omega_{n+1}r^n} \left( \int_{\partial ( \hat{B}_r \cap  \R_+^{n+1})} \langle \nabla u(Y), \nu \rangle  \, d \Ha^{n}(Y) -   \int_{   \hat{B}_r  \cap \partial \R_+^{n+1} } -  u_z(Y) \, d \Ha^{n}(Y) \right) \\
&= \frac{2 }{(n+1)\, \omega_{n+1}r^n} \left( \int_{\hat{B}_r \cap  \R_+^{n+1}} \Delta u (Y)  \, d Y -  \int_{ B_r}  (- \Delta)^{\frac{1}{2}}(I_{n- 1}f_E)(y)  \, dy \right) \\
&= - 2 \, \frac{\omega_{n}}{(n+1)\, \omega_{n+1}}\fint_{ B_r} (I_{n- 2}f_E)(y)   \, dy,
\end{split}
\end{equation}
where $\omega_n$ is the measure of the $n$-dimensional unit ball. Since $\phi_{n-2}(r)= \fint_{\partial B_r} ( I_{n-2} f_E)(y) \, d \Ha^{n-1}(y)$ is decreasing and non-negative, also $r \mapsto \fint_{ B_r} ( I_{n-2} f_E)(y) \, dy$ is decreasing and non-negative. Therefore \eqref{divergence} yields 
\[
\phi_{\tilde{u}}'(r) \leq -  \tilde{C}_n   \fint_{ B_1} ( I_{n-2} f_E)(y)   \, dy,
\]
for $\tilde{C}_n  = \frac{2\, \omega_{n}}{(n+1)\, \omega_{n+1}}$. Integrate the above inequality over $[R,1]$ to obtain
\[
\phi_{\tilde{u}}(1) - \phi_{\tilde{u}}(R) \leq  - \tilde{C}_n  \left(  \fint_{ B_1} ( I_{n-2} f_E)(y)   \, dy \right)  (1 -R) .
\]
Since $\lim_{R \to 0} \phi_{\tilde{u}}(R) = \tilde{u}(0) = (I_{n- 1}f_E)(0) $ we obtain from the above estimate that
\[
(I_{n- 1}f_E)(0) \geq \phi_{\tilde{u}}(1) + \tilde{C}_n \fint_{ B_1} (I_{n- 2}f_E)(y)   \, dy  \geq \tilde{C}_n  \fint_{ B_1} (I_{n- 2}f_E)(y)   \, dy ,
\] 
which proves \eqref{to-show}. 
\end{proof}

\textbf{Acknowledgements.} The research of  was partially supported by the 2008 ERC Grant no.226234 ''Analytic Techniques for Geometric and Functional Inequalities''


\begin{thebibliography}{99}


\bibitem{AFP}
{\sc L. Ambrosio, N. Fusco  \& D. Pallara},
Functions of bounded variation and free discontinuity problems. Oxford
Mathematical Monographs. The Clarendon Press, Oxford University Press, New
York, 2000.

\bibitem{CS}
{\sc L. Caffarelli \& L. Silvestre},
\emph{An extension problem related to the fractional Laplacian.} Comm. Par. Diff. Eq., \textbf{32}, (2007), no.7-9, 1245-1260.

\bibitem{CiSp}
{\sc M. Cicalese \& E. Spadaro},
\emph{Droplet minimizers of an isoperimetric problem with long-range interactions.} Preprint, 2011.



\bibitem{FJ}
{\sc N. Fusco \& V. Julin}
\emph{A strong form of the Quantitative Isoperimetric inequality.} Preprint, 2011.

\bibitem{Ga}
{\sc G. Gamow} 
\emph{Mass defect curve and nuclear constitution.} 
Proceedings of the Royal Society of London. Series A, 126:632–644, 1930.

\bibitem{KnMu1}
{\sc H. Kn\"upfer \& C.B. Muratov}
\emph{On an isoperimetric problem with a competing non-local term. I. The planar case.} to appear in Commun. Pure Appl. Math.

\bibitem{KnMu2}
{\sc H. Kn\"upfer \& C.B. Muratov}
\emph{On an isoperimetric problem with a competing non-local term. II. The general case. }  Preprint, 2012.


\bibitem{Mu}
{\sc  C.B. Muratov}
\emph{Theory of domain patterns in systems with long-range interactions of Coulomb type. } Phys. Rev. E, 66:066108 pp. 1-25, 2002.

\bibitem{Sil} {\sc L. Silvestre},  
\emph{Regularity of the obstacle problem for a fractional power of the laplace operator.} Ph.D. Thesis, The University of Texas at Austin, 2005.

\end{thebibliography}
\end{document}